\newtheorem{Definition}{Definition}[section]
\newtheorem{Theorem}[Definition]{Theorem}
\newtheorem{Proposition}[Definition]{Proposition}
\newtheorem{Remark}[Definition]{Remark}
\newtheorem{Lemma}[Definition]{Lemma}
\providecommand{\ch}[1]{\text{\raise 2pt \hbox{$\chi$}\kern-0.2pt}_{#1}}
\def\XXint#1#2#3{{\setbox0=\hbox{$#1{#2#3}{\int}$}
      \vcenter{\hbox{$#2#3$}}\kern-.5\wd0}}
\def\L{{{\mathcal{L}}}}
\renewcommand{\geq}{\geqslant}
\renewcommand{\leq}{\leqslant}
\renewcommand{\epsilon}{\varepsilon}
\begin{document}

\author{Emma D'Aniello and Martina Maiuriello}
\date{\today}

\title{On some generic small Cantor spaces}
\begin{abstract} 
Let $X = [0,1]^{n}$, $n \geq1$. We show that the typical (in the sense of Baire category) compact subset of $X$ is not only 
a zero dimensional Cantor space but it satisfies the property of being strongly microscopic, which is stronger than dimension zero. 
\end{abstract}
\keywords{Microscopic set, Cantor space, Baire category.}
\subjclass{28A05, 28A75, 54E52}

\maketitle


\section{Introduction}
Microscopic subsets of the real line were introduced in \cite{AP}. Since then, they have been widely 
studied (see, for instance, \cite{ADV}, \cite{BFN}, \cite{HO}, \cite{HO1},  \cite{HKW}, \cite{KW1}, \cite{KPW}, \cite{KPOW},  \cite{KW0}, \cite{KW},  \cite{FW}, \cite{PA}, \cite{PAWB}). The collection of microscopic subsets of the real line is a $\sigma$-ideal and it is a proper subset of the family of the Hausdorff zero dimensional sets (\cite{ADV}, \cite{HKW}).  In \cite{BFN}, the authors consider the family ${\mathcal CS}$ of symmetric Cantor subsets of $[0, 1]$, and among other results, they obtain properties concerning the subfamily of microscopic sets. In \cite{HO}, \cite{HO1} and \cite{KPW}, the authors generalise the notion of a microscopic set in ${\Bbb R}$. \\
In \cite{PA} and \cite{PAWB} some Fubini type properties involving microscopic fibers are studied. In \cite{KW1} and in \cite{KW}, the notion of a microscopic set in the plane is investigated.  The authors use the word ``miscroscopic" when the coverings consist of rectangles and the expression ``strong microscopic" when only coverings made of cubes are considered. \\
\indent This note is motivated by the above mentioned results. In section 2, we investigate how ``frequent" strongly microscopic, and therefore microscopic, compact sets are.    
We show that most compact subsets of ${[0,1]}^{n}$, $n \geq 1$,  are strongly microscopic. 
Our approach is the following. We furnish the class
of nonempty closed subsets of ${[0,1]}^{n}$ with the Hausdorff metric. Since it is a complete metric space (\cite{BBT}, \cite{EDG} \cite{FA}), we can 
make effective use of the Baire Category Theorem (\cite{BBT}, \cite{OX}). We find information concerning ``typical" members of in 
Theorem \ref{THMMICRO}. Indeed, we find that the typical compact subset $E$ of 
${[0,1]}^{n}$ is a strongly microscopic Cantor space, where we recall the term typical is to indicate that the collection of sets having the property under 
consideration has first category complement in the complete metric space. \\
Moreover, in section 3, we provide, in any dimension, an example of a non-microscopic, Hausdorff zero dimensional Cantor space.  

\section{Microscopic sets are not exceptional in ${[0,1]}^{n}$}

\begin{Definition}
Let $n \geq 1$. We call {\it interval} any subset $I$ of ${\Bbb R}^n$ of type $I= J_{1} \times \dots \times J_{n}$, with $J_{i}$ intervals of the real line, for each $1\leq i\leq n$. We call $I$ a {\it cube} if $J_{1}= \dots = J_{n}$.
\end{Definition}
Clearly, an interval $I$ is open if $J_{i}$ is an open interval of the real line for every $1 \leq i \leq n$. 

\begin{Definition}
A set $E \subseteq {\Bbb R}^n$ is {\it microscopic} if for each $\epsilon>0$ there exists a sequence of intervals $\left\lbrace I_k\right\rbrace _{k \in {\Bbb N}}$ such that $$E \subseteq \cup_{k \in {\Bbb N}}I_k 
\textit{ and } \lambda(I_k)\leq \epsilon^k, \textit{ for } k \in {\Bbb N},$$ where $\lambda$ is the Lebesgue measure on ${\Bbb R}^n.$
\end{Definition}

\begin{Definition}
A set $E \subseteq {\Bbb R}^n$, $n \geq 2$, is {\it strongly microscopic} if for each $\epsilon>0$ there exists a sequence of cubes $\left\lbrace I_k\right\rbrace _{k \in {\Bbb N}}$ such that $$E \subseteq \cup_{k \in {\Bbb N}}I_k 
\textit{ and } \lambda(I_k)\leq \epsilon^k, \textit{ for } k \in {\Bbb N},$$ where $\lambda$ is the Lebesgue measure on ${\Bbb R}^n.$
\end{Definition}

\begin{Remark}
Let  $E \subseteq {\Bbb R}^n$, $n \geq 2$. Then 
$$E \text{ strongly microscopic } \Rightarrow E \text{ microscopic.}$$
The above implication cannot be reverted \cite{KW}.  
\end{Remark}

Some simple properties about microscopic and strongly microscopic sets hold, and they are collected in the following proposition.
The proof in \cite{ADV} for the case $n=1$ works for any $n \in {\Bbb N}$, with slight, obvious modifications.

\begin{Proposition} \label{PROPT}
The following hold in ${\Bbb R}^n$:
\begin{enumerate}
\item{Every countable set is strongly microscopic.}
\item{Every microscopic set is a null set (meaning it has Lebesgue $n$-dimensional measure equal to $0$).}
\item{Every subset of a (strongly, resp.) microscopic set is (strongly, resp.) microscopic.}
\item{Every countable union of (strongly, resp.) microscopic sets is (strongly, resp.) microscopic.}
\item{Every strongly microscopic set $E$ has $\alpha$-dimensional Hausdorff measure equal to zero for all $\alpha>0$, and thus it has Hausdorff dimension zero.}
\end{enumerate}
\end{Proposition}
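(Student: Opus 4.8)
The plan is to adapt the one-dimensional proof of \cite{ADV}, which goes through verbatim except at item (5). One normalising remark is used throughout: a covering by intervals (resp. cubes) witnessing the defining property for some $\epsilon'\in(0,1)$ also witnesses it for every $\epsilon\geq\epsilon'$, since then $\lambda(I_k)\leq(\epsilon')^{k}\leq\epsilon^{k}$; hence we may always assume $0<\epsilon<1$. For (1), enumerate the set as $\{x_k\}_{k\in\mathbb{N}}$ (repeating points if it is finite, so the list is infinite) and, given $\epsilon$, take $I_k$ to be the cube centred at $x_k$ of side $\epsilon^{k/n}$, so that $x_k\in I_k$ and $\lambda(I_k)=\epsilon^{k}$. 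Item (3) is immediate, since the same covering of the larger set covers the subset and cubes remain cubes. For (2): if $E\subseteq\bigcup_k I_k$ with $\lambda(I_k)\leq\epsilon^{k}$, then the outer Lebesgue measure satisfies $\lambda^{*}(E)\leq\sum_{k\geq1}\epsilon^{k}=\epsilon/(1-\epsilon)\to 0$ as $\epsilon\to 0^{+}$, so $\lambda^{*}(E)=0$ and $E$ is null.

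For (4), write $E=\bigcup_{j\in\mathbb{N}}E_j$ with each $E_j$ (strongly) microscopic and fix $\epsilon\in(0,1)$. Apply the defining property of $E_j$ with parameter $\epsilon^{2^{j}}$, obtaining intervals (resp. cubes) $\{I^{(j)}_k\}_{k\in\mathbb{N}}$ with $E_j\subseteq\bigcup_k I^{(j)}_k$ and $\lambda(I^{(j)}_k)\leq\epsilon^{2^{j}k}$. Let $(j,k)\mapsto m(j,k):=2^{j-1}(2k-1)$ be the standard bijection $\mathbb{N}\times\mathbb{N}\to\mathbb{N}$, and set $J_{m(j,k)}:=I^{(j)}_k$. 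Then $\{J_m\}_{m\in\mathbb{N}}$ is a sequence of intervals (resp. cubes) covering $\bigcup_j E_j=E$, and because $m(j,k)\leq 2^{j}k$ and $0<\epsilon<1$ we get $\lambda(J_{m(j,k)})\leq\epsilon^{2^{j}k}\leq\epsilon^{m(j,k)}$, as required.

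Item (5) is the only step where the restriction to cubes is essential, and the only one that is not pure bookkeeping. Let $E$ be strongly microscopic, fix $\alpha>0$ and $\epsilon\in(0,1)$, and take cubes $\{I_k\}_k$ with $E\subseteq\bigcup_k I_k$ and $\lambda(I_k)\leq\epsilon^{k}$. A cube in $\mathbb{R}^n$ of volume $v$ has side $v^{1/n}$, hence diameter $\sqrt{n}\,v^{1/n}$, so $\mathrm{diam}(I_k)\leq\sqrt{n}\,\epsilon^{k/n}$; in particular the covering $\{I_k\}_k$ has mesh at most $\sqrt{n}\,\epsilon^{1/n}$ and
$$\sum_{k\geq1}\bigl(\mathrm{diam}\,I_k\bigr)^{\alpha}\ \leq\ n^{\alpha/2}\sum_{k\geq1}\bigl(\epsilon^{\alpha/n}\bigr)^{k}\ =\ n^{\alpha/2}\,\frac{\epsilon^{\alpha/n}}{1-\epsilon^{\alpha/n}},$$
whose right-hand side tends to $0$ as $\epsilon\to 0^{+}$. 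Letting $\epsilon\to 0^{+}$ gives $\mathcal{H}^{\alpha}(E)=0$, and since $\alpha>0$ is arbitrary, $\dim_{H}E=0$. The point to stress is that for a general interval the diameter is not controlled by the volume --- a long thin box has tiny volume but large diameter --- so this estimate genuinely needs the covering to consist of cubes; this is the only real obstacle, and it is precisely why the hypothesis ``strongly'' cannot be weakened to ``microscopic'' in (5).
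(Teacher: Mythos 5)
Your proof is correct and takes essentially the same route as the paper, which gives no details at all but simply states that the one-dimensional proof of \cite{ADV} works for every $n$ with slight, obvious modifications. You have written out exactly those modifications --- in particular the only genuinely $n$-dependent point, the estimate $\mathrm{diam}(I_k)\leq\sqrt{n}\,\lambda(I_k)^{1/n}$ for cubes in item (5), which is indeed why ``strongly'' cannot be replaced by ``microscopic'' there (cf.\ the paper's Remark on $[0,1]\times\{0\}$).
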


\begin{Remark}
Clearly, the notion of a strongly microscopic set differs from the notion of a microscopic set when $n\geq 2$.  When $n\geq 2$, 
Property $(5)$ of Proposition \ref{PROPT}  does not hold if we replace ``strongly microscopic" with ``microscopic". For example, 
as also observed in \cite{KW}, the set $A= [0,1] \times \{0\}$ is a microscopic subset of ${\Bbb R}^{2}$, but its Hausdorff dimension is one. 
\end{Remark}

Fix any $n \geq 1$. By ${\mathcal M}$ and by ${\mathcal M_{S}}$ we denote the collection of all microscopic subsets of ${\Bbb R}^{n}$ and the family of all strongly microscopic subsets of ${\Bbb R}^{n}$, respectively.\\

In order to show that strongly microscopic sets, and therefore microscopic sets, are not exceptional among the compact subsets of ${[0,1]}^{n}$, indeed they are very frequent, we need to recall some classical definitions and facts, and prove some preliminary results.    \\

In the sequel, by $d(\underline{x}, \underline{y})$, with $\underline{x}$ and $\underline{y}$ points of ${[0,1]}^{n}$, we always mean the Euclidean distance in ${\Bbb R}^{n}$. \\
As usual, we define the {\it diameter} of a non-empty set $A$, and we denote it by $diam(A)$, as the greatest distance apart of pairs of points in $A$, that is, $diam(A)=\sup \{d(\underline{x}, \underline{y}): \underline{x}, \underline{y} \in A\}$. Given two sets $A$ and $B$, we define their {\it Euclidean distance}, and we denote it by $dist(A,B)$ as $dist(A,B)=inf \{ d(\underline{x}, \underline{y}): \underline{x} \in A, \underline{y} \in B \}$. Hence, given a point
$\underline{z}$, the {\it distance} between the point $z$ and the set $A$ is
$dist(\underline{z}, A) =  dist( \{\underline{z} \},A)$.\\

Let ${\mathcal K}$ be the collection of non-empty, compact subsets of ${[0,1]}^{n}$, $n \geq 1$. We furnish ${\mathcal K}$ with the Hausdorff metric given by
$${\mathcal H}(A,E) = \text{inf} \{ \delta > 0: A \subset B_{\delta}(E), E \subset B_{\delta}(A)\},$$
where  the {\it $\delta$-neighbourhood} or {\it $\delta$-parallel body}, $B_{\delta}(E)$, of
a set $E$ is the set of points within distance $\delta$ of $E$. The Hausdorff metric space $({\mathcal K}, {\mathcal H})$ is also 
compact (\cite{BBT}, \cite{EDG} \cite{FA}), and therefore complete. \\
Since $({\mathcal K}, {\mathcal H})$ is complete, we can use the Baire category theorem.  Recall that a set is of the first category in the complete 
space $(X, \rho)$ whenever it can be written as a countable union of nowhere dense sets; otherwise, the set is of the second category (\cite{BBT}, \cite{OX}). 
A set is residual if it is the complement  of a first category set, and an element of a residual set 
is called {\it typical} (or generic). \\

In the sequel, by $V \left( [0,1]^n\right)$, we denote the set of all vertices of $[0,1]^n$, that is 
$$V \left( [0,1]^n\right)  =\{ (x_{1}, \dots, x_{n}) \in [0,1]^n : x_{j} \in \{0, 1\}, \forall  j \in \{1, \dots, n\}\}.$$

Given an interval $I =  J_{1} \times \dots \times J_{n}$ of ${\Bbb R}^n$, with $J_{i}$ intervals of the real line, for each $1\leq i\leq n$, we let 
$$l_{I} = \min\{ \lambda(J_{i}) : 1\leq i\leq n\}$$
where $\lambda(J_{i})$ is the $1$-dimensional Lebesgue measure of the real interval $J_{i}$, that is its length. 

\begin{Definition}
Let $I_1,...,I_t$ be open intervals (relative to $[0,1]^n$). Let $B(I_1,...,I_t)$ be the collection of all $K \in {\mathcal K}$ such that:
\begin{enumerate}
\item $K \subseteq \cup_{i=1}^t I_i$;
\item $K \cap I_i\neq\emptyset$ for each $i \in \lbrace1,...,t\rbrace$.
\end{enumerate}
\end{Definition}

Next we generalise Lemma 2.5 of \cite{BC} to any finite dimension.

\begin{Lemma} Let $I_1,...,I_t$ be open intervals (relative to $[0,1]^n$). 
Then  $B(I_1,...,I_t)$ is open in  $({\mathcal K}, {\mathcal H}) $.
\end{Lemma}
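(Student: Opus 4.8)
The plan is to check the definition of openness directly: fix $K\in B(I_1,\dots,I_t)$ and produce a single $\delta>0$ so that every $K'\in\mathcal K$ with $\mathcal H(K,K')<\delta$ again lies in $B(I_1,\dots,I_t)$. I will obtain $\delta$ by handling the two defining conditions (1) and (2) separately, getting a tolerance for each, and then taking the minimum.

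For condition (1), note that $K$ is compact, $\bigcup_{i=1}^t I_i$ is open relative to $[0,1]^n$, and $K\subseteq\bigcup_{i=1}^t I_i$. If $\bigcup_{i=1}^t I_i=[0,1]^n$ any positive number will do; otherwise the set $[0,1]^n\setminus\bigcup_{i=1}^t I_i$ is a nonempty closed, hence compact, subset of $[0,1]^n$ disjoint from $K$, so $\delta_0:=\dist\!\left(K,[0,1]^n\setminus\bigcup_{i=1}^t I_i\right)>0$. Then for $z\in[0,1]^n$ with $\dist(z,K)<\delta_0$ one cannot have $z\notin\bigcup_{i=1}^t I_i$, so $B_{\delta_0}(K)\cap[0,1]^n\subseteq\bigcup_{i=1}^t I_i$. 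Consequently, if $\mathcal H(K,K')<\delta_0$ then $K'\subseteq B_{\delta_0}(K)$, and since $K'\subseteq[0,1]^n$ we get $K'\subseteq\bigcup_{i=1}^t I_i$, i.e.\ $K'$ satisfies (1).

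For condition (2), for each $i\in\{1,\dots,t\}$ pick a point $x_i\in K\cap I_i$. Since $I_i$ is open relative to $[0,1]^n$, there is $r_i>0$ with $B_{r_i}(x_i)\cap[0,1]^n\subseteq I_i$. Now if $\mathcal H(K,K')<r_i$, then $K\subseteq B_{r_i}(K')$, so there exists $y_i\in K'$ with $d(x_i,y_i)<r_i$; because $y_i\in[0,1]^n$, this gives $y_i\in B_{r_i}(x_i)\cap[0,1]^n\subseteq I_i$, hence $K'\cap I_i\neq\emptyset$. Setting $\delta:=\min\{\delta_0,r_1,\dots,r_t\}>0$, any $K'\in\mathcal K$ with $\mathcal H(K,K')<\delta$ satisfies both (1) and (2), so $K'\in B(I_1,\dots,I_t)$, which proves that $B(I_1,\dots,I_t)$ is open in $(\mathcal K,\mathcal H)$.

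I do not expect a genuine obstacle here; this is the natural higher-dimensional transcription of Lemma~2.5 of \cite{BC}. The only points deserving (mild) attention are the degenerate case $\bigcup_i I_i=[0,1]^n$ in the treatment of condition~(1), and keeping track throughout that the intervals, and the balls $B_\delta(\cdot)$, are taken relative to the ambient cube $[0,1]^n$, so that a point within distance $r_i$ of $x_i$ automatically stays inside $[0,1]^n$ and therefore inside $I_i$.
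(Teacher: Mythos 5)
Your proof is correct and follows essentially the same route as the paper: a tolerance for condition (1) via the distance from $K$ to $[0,1]^n\setminus\bigcup_i I_i$ (with the degenerate case treated separately), a tolerance for condition (2) via points $x_i\in K\cap I_i$, and the minimum of these as $\delta$. The only difference is cosmetic: you take $r_i$ directly from relative openness of $I_i$, which lets you avoid the paper's separate treatment of the case where $x_i$ is a vertex of $[0,1]^n$ (handled there via the minimal side length $l_{I_i}$).
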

\begin{proof}
Let $K \in B(I_1,...,I_t)$. We need to find $\epsilon>0$ such that $B_{\epsilon}(K)\subseteq B(I_1,...,I_t)$ (we recall that  $B_{\epsilon}(K)= \lbrace T \in {\mathcal K}  : {\mathcal H}(T,K)<\epsilon \rbrace$). 
For each ${i}\in{\left\lbrace 1,...,t\right\rbrace}$, we choose $\underline{x_i} \in {K \cap {I_i}}$ and we define: \[\ \epsilon_i=\left\{ \begin{array}{ll}
dist\left( {\underline{x}_i}, {[0,1]^n}\setminus{I_i} \right)  & \mbox{if } \underline{x}_i \notin V\left( [0,1]^n\right),  \\ l_{I_i} & \mbox{if } 
\underline{x}_i \in V\left( [0,1]^n\right),\\
\end{array}
\right.\] and \[\ {\epsilon_0}=\left\{ \begin{array}{ll}
1  & \mbox{if } {\cup_{i=1}^{t}{I_i}}=[0,1]^n,  \\ dist\left( K, {[0,1]^n}\setminus{\cup_{i=1}^{t} {I_i}} \right) & \mbox{if } {\cup_{i=1}^{t}{I_i}}\neq[0,1]^n. \\
\end{array}
\right.\] Let $\epsilon:=min\lbrace\epsilon_i, 0\leq i\leq t\rbrace.$ We now show that $B_{\epsilon}(K)\subseteq B(I_1,...,I_t)$. To this aim, let us consider $T \in B_{\epsilon}(K)$. We first show that $T \cap I_i \neq\emptyset $ 
for each $i \in \lbrace 1,...,t\rbrace$. \\
Let $i \in \lbrace 1,...,t\rbrace$. As ${\mathcal{H}}(K,T)<\epsilon\leq\epsilon_i $, it follows that $T \cap I_i \neq\emptyset $. 
In fact:
\begin{enumerate}
\item[\emph{Case 1:}]  $\underline{x}_i \notin V \left( [0,1]^n\right)$. Then, $\epsilon_i=dist\left( {\underline{x}_i}, {[0,1]^n}\setminus{I_i} \right)$. If by contradiction $T \cap I_i =\emptyset$, then 
$T \subseteq ({[0,1]^n}\setminus{I_i})$ so that $dist(\underline{x}_i,T)>\epsilon_i$ and then $\underline{x}_i \notin B_{\epsilon_i}(T).$ This is in contrast with the fact that $\underline{x}_i \in K \subseteq B_{\epsilon_i}(T)$.
\item[\emph{Case 2:}] $\underline{x}_i \in V \left( [0,1]^n\right)$. Then, $\epsilon_i=
l_{I_{i}}$. As $\underline{x}_i \in K \subseteq B_{\epsilon_i}(T)$, it follows that there exists $\underline{z} \in T$ 
such that  $d(\underline{x}_i,\underline{z})<{\epsilon_i}= l_{I_i}$ and hence $z \in T \cap I_i $.
\end{enumerate}
Now, we prove that if ${\mathcal{H}}(K,T)<\epsilon_0 $ then $T \subseteq \cup_{i=1}^{t} {I_i}$. In fact:
\begin{enumerate}
\item[\emph{Case i:}] $\epsilon_0=1$. Then, clearly $T \subseteq \cup_{i=1}^{t} {I_i}$ since  ${\cup_{i=1}^{t} {I_i}}=[0,1]^n$.
\item[\emph{Case ii:}] $\epsilon_0=dist\left( K, {[0,1]^n}\setminus{\cup_{i=1}^{t}{I_i}}\right)$,  with $\cup_{i=1}^{t} {I_i} \not= [0,1]^n$. If, by contradiction, $T \nsubseteq \cup_{i=1}^{t} {I_i}$, then 
by the definition of $\epsilon_0$ it cannot be $T \subseteq B_{\epsilon_0}(K)$. This is in contrast with the fact that ${\mathcal{H}}(K,T)<\epsilon_0 $.
\end{enumerate}
Hence, the proof is complete.
\end{proof}

\begin{Proposition} \label{PROPA} 
The typical compact subset of $[0,1]^n$ is a strongly microscopic set.
\end{Proposition}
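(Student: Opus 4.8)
The plan is to realize $\cM_S \cap \mathcalK$ as a residual set in $(\mathcalK, \mathcalH)$ by exhibiting, for each $\epsilon \in (0,1)$, an open dense set $\mathcalG_\epsilon \subseteq \mathcalK$ consisting of compacta that admit an $\epsilon$-good cube covering, and then setting $\mathcalG = \bigcap_{m \geq 1} \mathcalG_{1/m}$. First I would make precise what $\mathcalG_\epsilon$ should be: say $K \in \mathcalG_\epsilon$ if there exist \emph{finitely} many open cubes $Q_1, \dots, Q_t$ (relative to $[0,1]^n$) with $K \subseteq \bigcup_{i=1}^t Q_i$, $K \cap Q_i \neq \emptyset$ for every $i$, and $\lambda(Q_i) \leq \epsilon^i$ for $i = 1, \dots, t$. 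Note that $\mathcalG_\epsilon = \bigcup B(Q_1, \dots, Q_t)$, the union taken over all finite tuples of open cubes satisfying the size constraint $\lambda(Q_i) \leq \epsilon^i$; hence $\mathcalG_\epsilon$ is open by the preceding Lemma. The point of the finiteness is that a typical $K$ will be covered by finitely many cubes of the required sizes, and that is already enough: the \emph{remaining} cubes of the strongly-microscopic sequence can be chosen later, arbitrarily small, one per leftover index — but in fact we do not even need them, because if $K \in \mathcalG_\epsilon$ for \emph{every} $\epsilon$, then for fixed $\epsilon$ we can apply the definition with a smaller parameter and pad with tiny cubes to exhaust all indices $k \in \mathbb{N}$; this bookkeeping is routine and I would relegate it to a short paragraph.

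Next I would prove density of $\mathcalG_\epsilon$. Fix $\epsilon \in (0,1)$, an arbitrary $K_0 \in \mathcalK$, and $\delta > 0$; I must produce $K \in \mathcalG_\epsilon$ with $\mathcalH(K, K_0) < \delta$. The natural candidate is a \emph{finite} set: cover $[0,1]^n$ by finitely many closed cubes of side less than $\delta/(2\sqrt{n})$ forming a grid, pick from each grid cube that meets $K_0$ one point of $K_0$, and let $K$ be the resulting finite set of points $\{p_1, \dots, p_t\}$; then $\mathcalH(K, K_0) < \delta$. A finite set is strongly microscopic (Proposition \ref{PROPT}(1)), but I need the stronger fact that it lies in $\mathcalG_\epsilon$, i.e. it has a \emph{finite} $\epsilon$-good cube covering with one cube per point — which is immediate: around $p_i$ put an open cube $Q_i$ (relative to $[0,1]^n$) of $\lambda$-measure $\min\{\epsilon^i, \text{(something keeping it from swallowing other } p_j)\}$, small enough that $p_j \notin \overline{Q_i}$ for $j \neq i$; then $K \subseteq \bigcup Q_i$, each $Q_i$ meets $K$, and $\lambda(Q_i) \leq \epsilon^i$. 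Hence $K \in B(Q_1, \dots, Q_t) \subseteq \mathcalG_\epsilon$, proving density.

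Having shown each $\mathcalG_{1/m}$ is open and dense, the Baire category theorem in the complete (indeed compact) space $(\mathcalK, \mathcalH)$ gives that $\mathcalG = \bigcap_{m \geq 1} \mathcalG_{1/m}$ is residual. It remains to check $\mathcalG \subseteq \cM_S$, i.e. that membership in every $\mathcalG_{1/m}$ actually yields the full defining sequence of cubes $\{I_k\}_{k \in \mathbb{N}}$ with $\lambda(I_k) \leq \epsilon^k$ for a \emph{given} $\epsilon > 0$. Given such $\epsilon$, choose $\epsilon' < \epsilon$ of the form $1/m$; since $K \in \mathcalG_{\epsilon'}$, there are open cubes $Q_1, \dots, Q_t$ covering $K$ with $\lambda(Q_i) \leq (\epsilon')^i < \epsilon^i$; for indices $k > t$ simply set $I_k$ to be any cube of measure $\leq \epsilon^k$ intersecting $K$ (e.g. a tiny cube around a point of $K$), and $I_k = Q_k$ for $k \leq t$. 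Then $K \subseteq \bigcup_k I_k$ and $\lambda(I_k) \leq \epsilon^k$ for all $k$, so $K$ is strongly microscopic.

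The main obstacle, such as it is, is purely bookkeeping: the definition of strongly microscopic demands an infinite sequence indexed by all of $\mathbb{N}$ with the geometric size bound $\lambda(I_k) \leq \epsilon^k$ holding \emph{from the first index on}, whereas the open sets $\mathcalG_\epsilon$ naturally only control finitely many cubes; one has to be a little careful that the early, large-measure cubes $Q_1, Q_2, \dots$ coming from a slightly smaller parameter $\epsilon'$ still satisfy the bound with $\epsilon$, which is why passing to $\epsilon' < \epsilon$ (equivalently, intersecting over $\epsilon = 1/m$) is essential rather than cosmetic. Everything else — openness from the Lemma, density via finite point sets, and the Baire argument — is standard. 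One should also remark that the case $n = 1$ is included, with ``cube'' meaning ``interval'', so that ``strongly microscopic'' coincides with ``microscopic'' there, matching the statement of Theorem \ref{THMMICRO}.
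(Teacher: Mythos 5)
Your proposal is correct and takes essentially the same route as the paper: openness comes from the Lemma that $B(I_1,\dots,I_t)$ is open, density comes from finite sets, and the result follows by intersecting over the scales $1/m$ and applying Baire in the compact space $(\mathcal{K},\mathcal{H})$. The only (harmless) organizational difference is that you build finiteness of the covers into the definition of $\mathcal{G}_{\epsilon}$ and pad with small cubes at the end to recover the full sequence, whereas the paper defines $\mathcal{K}^{[s]}$ with infinite covers, extracts a finite subcover by compactness inside the openness proof, and thereby exhibits the family of compact strongly microscopic sets itself as a dense $G_{\delta}$ rather than merely containing one.
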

\begin{proof}
Let $${\mathcal M}_{{\mathcal SK}}={\left\lbrace  E\subset {[0,1]^n}  : \text{E is non-empty, compact and strongly microscopic} \right\rbrace} $$ that is $ {\mathcal M}_{{\mathcal SK}}={\mathcal K} \cap {\mathcal M_{S}} $. \\
Let us start by showing that $ {\mathcal M}_{{\mathcal SK}}$ is dense in ${\mathcal K}$. Let ${\mathcal F}=\left\lbrace E \in {\mathcal K}: E \text{ is finite}\right\rbrace $. It is clear that $\mathcal F$ is dense in ${\mathcal K}$  
and each element of $\mathcal F$ is strongly microscopic. Thus ${\mathcal M}_{{\mathcal SK}}$ is dense in  ${\mathcal K}.$ \\
Now we prove that ${\mathcal M}_{{\mathcal SK}}$ is a $G_\delta$ subset of ${\mathcal K}.$
For each $s \in {\Bbb N}$ let 
\begin{eqnarray*}
& & {\mathcal K}^{[s]} = \\
& & \left\lbrace E \in {\mathcal K}: \exists\lbrace I_j \rbrace_{j \in {\Bbb N}} \text{ sequence of open cubes with } E \subseteq \cup_{j \in {\Bbb N}} I_j, \lambda(I_j)\leq \left( \dfrac{1}{s}\right)^j\right\rbrace. \\
\end{eqnarray*}
Clearly, ${\mathcal M}_{{\mathcal SK}}= \cap_{s=1}^ \infty {\mathcal K}^{[s]}.$  Next, we show that, for every $s \in {\Bbb N}$, the set  ${\mathcal K}^{[s]}$ is open. Fix $s \in {\Bbb N}$.  Let $E \in {\mathcal K}^{[s]}$, and let  $\{I_{j}\}_{j \in {\Bbb N}}$ be a covering of $E$,  where each $I_{j}$ is an open cube with $\lambda(I_{j}) \leq {(\frac{1}{s})}^{j}$.
Then, as $E$ is compact, there exist $I_{j_1},...,I_{j_m}$ finitely many open cubes such that:
\begin{enumerate}
\item {$ E \subseteq \cup_{t=1}^m I_{j_t}$,}
\item {$\lambda(I_{j_t}) \leq \left( \dfrac{1}{s}\right) ^{j_t}$, for $t=1,...,m.$}
\end{enumerate}
Thus, if $ F \in B(I_{j_1},...,I_{j_m})$ it is $F \subseteq \cup_{t=1}^m I_{j_t}$ and hence $F \in  {\mathcal K}^{[s]}.$ So that ${\mathcal K}^{[s]}$ is open. Therefore  ${\mathcal M}_{{\mathcal SK}}$ is a dense 
$G_\delta$ set, and hence the thesis. 
\end{proof}

We recall that a topological space is a {\it Cantor space} if it is non-empty, perfect, compact, totally disconnected, and metrisable.\\
Hence, a topological space is a Cantor space if it is homeomorphic to the Cantor ternary set. 
As in \cite{DS}, let ${\Bbb Q}$ denote the rational numbers and $${\mathcal IR}=\left\lbrace (a_1,...,a_n) \in [0,1]^n : a_j \in [0,1] \setminus {\Bbb Q}, \text{ for all } 1 \leq j \leq n\right\rbrace ,$$ 
that is, ${\mathcal IR}$ denotes the collection of all points in $[0,1]^n$ with all the coordinates irrational. \\
Let, as in \cite{DS},  
$${\mathcal K}_1 =\left\lbrace F \in {\mathcal  K} : F \text{ is a Cantor space and } F\subseteq {\mathcal  IR}\right\rbrace .$$ We recall from \cite{DS} the following proposition:

\begin{Proposition} \label{PROPA1} 
The collection ${\mathcal  K}_1$ is a dense set of type $G_\delta$ in ${\mathcal  K}.$
\end{Proposition}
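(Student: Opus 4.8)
The plan is to show, separately, that $\mathcalK_1$ is dense in $\mathcalK$ and that it is a $G_\delta$ subset, following the pattern already used for $\mathcalM_{\mathcalSK}$ in Proposition \ref{PROPA}. For density, I would start from the finite sets $\mathcalF$ (which are dense in $\mathcalK$) and observe that finite sets can be approximated arbitrarily well, in the Hausdorff metric, by finite unions of small ``scaled copies'' of the Cantor ternary set whose defining coordinates are all irrational; more directly, one exhibits, near any given $K \in \mathcalK$, a self-similar Cantor set built inside a small cube centred at a point of $\mathcalIR$, with all ratios irrational, so that the resulting set lies in $\mathcalIR$ and is a Cantor space. Since such sets can be taken within any prescribed Hausdorff distance of $K$, density of $\mathcalK_1$ follows.

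For the $G_\delta$ part I would write $\mathcalK_1$ as an intersection of countably many conditions and verify each is $G_\delta$ (indeed open or a countable intersection of opens). Being a subset of $\mathcalIR$: for each $m$, the set of $F \in \mathcalK$ contained in $\{\underline{x} : \dist(x_j,\mathbbQ \cap [0,1]) $ witnessed appropriately$\}$ — more carefully, one covers the ``bad'' set $\{\underline{x} \in [0,1]^n : x_j \in \mathbbQ \text{ for some } j\}$, which is a countable union $\cup_k Q_k$ of hyperplane slices $\{x_j = q\}$, and notes that $\{F : F \cap Q_k = \emptyset\} = \{F : F \subseteq [0,1]^n \setminus Q_k\}$ is open in $(\mathcalK,\mathcalH)$ because $[0,1]^n \setminus Q_k$ is open and ``$F$ contained in a fixed open set'' is an open condition (a compact set a positive distance from the closed complement stays inside under small Hausdorff perturbation). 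Being perfect (no isolated points): for each pair of positive integers $(k,\ell)$ the set of $F$ having an isolated point realised at ``scale $\ge 1/\ell$ inside the $k$-th member of a fixed countable base'' is, on inspection, closed with empty interior, so its complement is a dense open set; intersecting over $(k,\ell)$ gives a $G_\delta$ capturing perfectness. Total disconnectedness, compactness and metrisability are automatic: every nonempty compact metric space in $\mathcalK$ is metrisable, and a compact subset of $\mathcalIR$ is automatically totally disconnected since $\mathcalIR$ contains no nondegenerate interval in any coordinate, hence no connected subset with more than one point. Combining, $\mathcalK_1$ is a countable intersection of open sets.

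Finally, by the Baire category theorem in the complete (indeed compact) space $(\mathcalK,\mathcalH)$, a dense $G_\delta$ is residual, which is the assertion; but since the statement only claims ``dense $G_\delta$'' the Baire step is not even needed for the conclusion as phrased.

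I expect the main obstacle to be the perfectness clause: unlike the ``contained in an open set'' conditions, which are cleanly open, ruling out isolated points requires choosing the right countable family of ``scale $\times$ location'' parameters and checking that ``$F$ has an isolated point detected by that parameter'' defines a \emph{closed} set with empty interior, so that perfect sets form a $G_\delta$ — this is where the argument in \cite{DS} does the real work, and I would lean on that reference rather than redo it.
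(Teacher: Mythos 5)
Your outline is essentially sound, but note that the paper itself gives no proof of this proposition: it is simply recalled from \cite{DS}, so there is no internal argument to compare against, and what you have done is reconstruct a proof along standard hyperspace lines. Your decomposition is the natural one: density via finite sets replaced by tiny Cantor sets inside $\mathcal{IR}$; the $G_\delta$ part via (i) avoidance of the countably many rational hyperplane slices $Q_k$, each condition $\{F: F\cap Q_k=\emptyset\}=\{F: F\subseteq [0,1]^n\setminus Q_k\}$ being open since a compact set at positive distance from the compact slice stays disjoint under small Hausdorff perturbations, and (ii) the classical fact that perfect compact sets form a $G_\delta$ in $({\mathcal K},{\mathcal H})$; plus the pleasant observation that total disconnectedness is automatic for subsets of $\mathcal{IR}$ (a connected set with two points would project onto a nondegenerate interval in some coordinate, hence hit a rational). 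Two small repairs are needed. First, in the density step, ``self-similar with all ratios irrational'' does not by itself put the attractor inside $\mathcal{IR}$ (points of such a set can perfectly well have rational coordinates); the clean fix is to build, in each coordinate of a small cube near a chosen point of $K$, a Cantor set of irrationals by a nested-interval construction avoiding an enumeration of the rationals, and take the product of these $n$ one-dimensional Cantor sets, which is a Cantor space contained in $\mathcal{IR}$ and within any prescribed Hausdorff distance of the finite approximation of $K$. Second, in the perfectness step, what you actually need is only that each set $\{F: \exists\, x\in F \text{ with } F\cap B(x,1/\ell)=\{x\}\}$ is closed (a routine limit argument with witnesses $x_m\in F_m$), so its complement is open and the perfect sets are a countable intersection of open sets; the ``empty interior'' claim is neither needed nor available before density is established, and density of ${\mathcal K}_1$ you prove separately anyway. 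With these adjustments your argument is a correct, self-contained substitute for the citation to \cite{DS}, which is arguably a gain in completeness at the cost of length, whereas the paper's choice simply outsources the work to the earlier reference.
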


From this result and from Proposition \ref{PROPA} we obtain the following theorem. 

\begin{Theorem} \label{THMMICRO}
The typical compact subset $K$ of $[0,1]^n$ is a strongly microscopic Cantor space. 
\end{Theorem}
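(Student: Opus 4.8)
The plan is to combine Proposition \ref{PROPA} and Proposition \ref{PROPA1} via the Baire category theorem, exploiting the fact that a countable intersection of dense $G_\delta$ sets in a complete metric space is again a dense $G_\delta$ set, hence residual. First I would observe that $(\mathcal{K}, \mathcal{H})$ is a complete metric space (indeed compact), as recalled in the preliminaries, so the Baire category theorem applies and residual subsets are genuinely ``typical'' in the stated sense.

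Next I would set $\mathcal{M}_{\mathcal{SK}} = \mathcal{K} \cap \mathcal{M}_S$ as in Proposition \ref{PROPA}, where it is shown that $\mathcal{M}_{\mathcal{SK}}$ is a dense $G_\delta$ subset of $\mathcal{K}$, and recall from Proposition \ref{PROPA1} that $\mathcal{K}_1$, the collection of Cantor spaces contained in $\mathcal{IR}$, is also a dense $G_\delta$ subset of $\mathcal{K}$. Then I would form the intersection $\mathcal{G} := \mathcal{M}_{\mathcal{SK}} \cap \mathcal{K}_1$. Since the intersection of two $G_\delta$ sets is $G_\delta$, and since in the complete (Baire) space $\mathcal{K}$ a dense $G_\delta$ set is residual, the intersection of two residual sets is residual; equivalently, the complement of $\mathcal{G}$ is a union of two first-category sets, hence first category. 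One can also argue directly that a finite (or countable) intersection of dense $G_\delta$ sets in a Baire space is again dense $G_\delta$. Therefore $\mathcal{G}$ is a dense $G_\delta$, hence residual, subset of $\mathcal{K}$.

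Finally I would conclude that every $K \in \mathcal{G}$ is simultaneously strongly microscopic (being in $\mathcal{M}_{\mathcal{SK}}$) and a Cantor space (being in $\mathcal{K}_1$); since $\mathcal{G}$ is residual, the typical compact subset $K$ of $[0,1]^n$ is a strongly microscopic Cantor space, which is exactly the assertion of Theorem \ref{THMMICRO}. I do not anticipate any serious obstacle here: the entire content of the theorem has been front-loaded into the two propositions, and the only thing to check is the elementary topological fact that residuality is preserved under finite intersections in a complete metric space. The one point worth stating carefully is that $\mathcal{G}$ is nonempty (so the statement is not vacuous), which follows immediately from density, and that for $n=1$ the notions of microscopic and strongly microscopic coincide, so the theorem still reads correctly in that case.
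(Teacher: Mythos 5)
Your argument is correct and coincides with the paper's own proof: both intersect the dense $G_\delta$ set $\mathcal{M}_{\mathcal{SK}}$ from Proposition \ref{PROPA} with the dense $G_\delta$ set $\mathcal{K}_1$ from Proposition \ref{PROPA1} and invoke the fact that such an intersection is again a dense $G_\delta$ (hence residual) in the complete space $(\mathcal{K},\mathcal{H})$. Your added remarks on nonemptiness and the case $n=1$ are harmless extras not needed in the paper's version.
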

\begin{proof}
By Proposition \ref{PROPA}, the collection of the non-empty, compact and strongly microscopic subsets of $[0,1]^n$, that is ${\mathcal  M}_{{\mathcal  SK}},$ is a dense set of type $G_\delta$. 
By Proposition \ref{PROPA1}, the collection ${\mathcal  K}_1$ also is a dense $G_\delta$ set. Since the intersection of two dense $G_\delta$ sets is still a dense $G_\delta$ set, the thesis follows.
\end{proof}

\section{Examples}
Cantor spaces in ${[0,1]}^{n}$ can be of various type, for instance very  irregular, symmetric, etc. The Cantor ternary set is symmetric but it is not microscopic. On the other hand, on the real line, there exist symmetric Cantor spaces that are microscopic. More is true: in  \cite{BFN}, it is shown that in $[0,1]$ microscopic symmetric Cantor spaces form a residual family.  \\
\indent In ${[0, 1]}^{n}$, we find Cantor spaces of any positive dimension, self-similar and non, (\cite{FA}, \cite{BBT}, \cite{ED}, \cite{BFN}) and therefore non-strongly microscopic. In \cite{ADV}, 
it was shown that, in $[0,1]$,  Hausdorff dimension zero is only a necessary but not a sufficient condition in order for a set, and in particular for a Cantor space, to be microscopic. In the next example, for any $n \geq 1$, we construct a 
Cantor space in ${[0,1]}^{n}$ having Hausdorff dimension zero but not being microscopic. \\

\begin{Definition}
In the following, for a general $n \geq 1$, by a {\it cube} we mean a non-degenerated closed cube, that is any interval 
$I= J_{1} \times \dots \times J_{n}$ of ${\Bbb R}^n$ such that there exists a non-degenerate closed interval of the real line, $[a, b]$, with
$J_{i} = [a,b]$ for each $1\leq i\leq n$. Clearly, when $n=1$ it is more natural to talk about closed intervals, when $n=2$ about closed squares, and when $n=3$ about closed cubes. 
\end{Definition}

{\bf Example 3.2} Fix $n \geq 1$. In the sequel, by $\lambda$ we denote the Lebesgue $n$-dimensional measure. The Cantor space we construct from the unit cube ${[0,1]}^{n}$ is a ``Cantor dust" (\cite{EDG0}, \cite{FA}) . At each stage of the construction we select smaller cubes in the way described below.\\
Fix a constant $c\geq 2^n +1,$  let $V_0=1,$ and $V_k=\dfrac{1}{c^{k^2}}$ for $k \in {\Bbb N}.$ \\
At the first step we select $2^n$ disjoint cubes in $[0,1]^n$ of measure $V_1=\dfrac{1}{c},$ and having one vertex in $V \left( [0,1]^n\right)$.  We list these cubes as $Q_{i_1}$, with $i_1 \in {\lbrace 1,...,2^n \rbrace}$. \\ At the second step, in each of the previous cubes  $Q_{i_1},$ we select $2^n$ disjoint cubes of measure $V_2=\dfrac{1}{c^{4}},$ having one vertex in common with $Q_{i_1}$, and we list them as $Q_{i_1,i_2}$, with $(i_1,i_2) \in {\lbrace 1,...,2^n \rbrace^2}.$ \\ At the k-th step, in each cube $Q_{i_1,...,i_{k-1}}$ with $(i_1,...,i_{k-1}) \in  {\lbrace 1,...,2^n \rbrace^{k-1}},$ we select $2^n$ disjoint cubes of measure $V_k=\dfrac{1}{c^{k^2}}$ and having one vertex in common with $Q_{i_1,...,i_{k-1}}$. We list these cubes  as $Q_{i_1,...,i_k},$ with $(i_1,...,i_k) \in {\lbrace 1,...,2^n \rbrace^k}.$ 



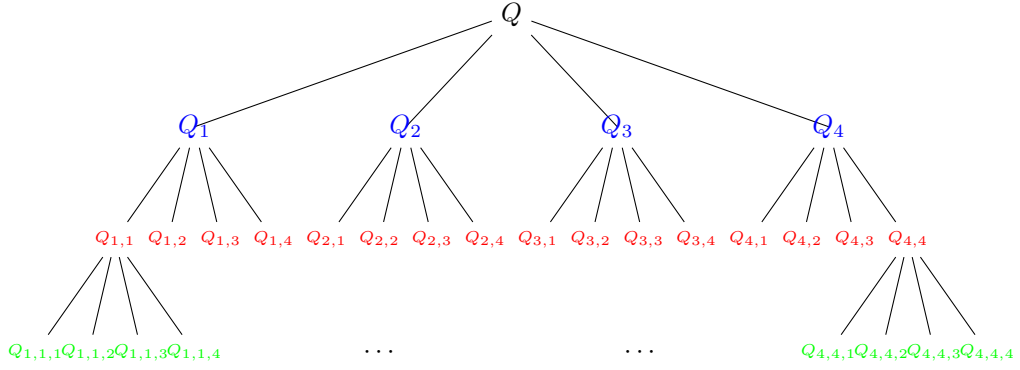
\begin{figure}
\begin{tikzpicture}[sibling distance=8em]
  \node {$Q$}
    child { [sibling distance=2em] node (a) {\color{blue}{$Q_1$}} 
     child { node {\color{red}{\tiny{$Q_{1,1}$}}}
             child { node (a1) {\color{green}{\tiny{$Q_{1,1,1}$}}}}
             child { node (a2) {\color{green}{\tiny{$Q_{1,1,2}$}}}}
             child { node (a3) {\color{green}{\tiny{$Q_{1,1,3}$}}}}
             child { node (a4) {\color{green}{\tiny{$Q_{1,1,4}$}}}}}
     child { node {\color{red}{\tiny{$Q_{1,2}$}}}}
     child { node {\color{red}{\tiny{$Q_{1,3}$}}}}
     child { node {\color{red}{\tiny{$Q_{1,4}$}}}}}
    child { [sibling distance=2em] node {\color{blue}{$Q_2$}} 
     child { node {\color{red}{\tiny{$Q_{2,1}$}}}}
     child { node {\color{red}{\tiny{$Q_{2,2}$}}}}
     child { node {\color{red}{\tiny{$Q_{2,3}$}}}}
     child { node {\color{red}{\tiny{$Q_{2,4}$}}}}}
    child { [sibling distance=2em] node {\color{blue}{$Q_3$}} 
     child { node {\color{red}{\tiny{$Q_{3,1}$}}}}
     child { node {\color{red}{\tiny{$Q_{3,2}$}}}}
     child { node {\color{red}{\tiny{$Q_{3,3}$}}}}
     child { node {\color{red}{\tiny{$Q_{3,4}$}}}}}
    child { [sibling distance=2em] node (b) {\color{blue}{$Q_4$}}
     child { node {\color{red}{\tiny{$Q_{4,1}$}}}}
     child { node {\color{red}{\tiny{$Q_{4,2}$}}}}
     child { node {\color{red}{\tiny{$Q_{4,3}$}}}}
     child { node {\color{red}{\tiny{$Q_{4,4}$}}}
               child { node (b1) {\color{green}{\tiny{$Q_{4,4,1}$}}}}
               child { node (b2) {\color{green}{\tiny{$Q_{4,4,2}$}}}}
               child { node (b3) {\color{green}{\tiny{$Q_{4,4,3}$}}}}
               child { node (b4) {\color{green}{\tiny{$Q_{4,4,4}$}}} }}};
    \path (a4) -- (b1) node [midway] {$\cdots \hspace{3cm} \cdots $};
   \end{tikzpicture}
\caption{Tree of the construction of Example 3.2 in $Q=[0,1]^2$}
\label{Fig1}%
\end{figure}

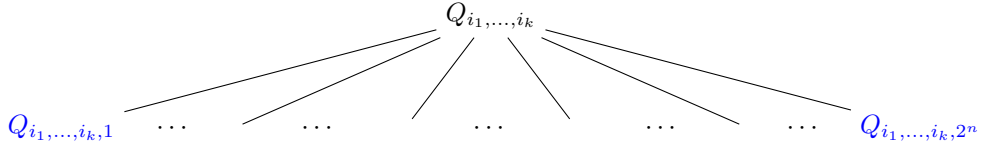
\begin{figure}
\begin{tikzpicture}[sibling distance=6.5em]   
     \node {$Q_{i_1,...,i_k}$}
    child { node (c) {\color{blue}{$Q_{i_1,...,i_{k},1}$}}} 
    child { node (e) {\color{blue}}}
    child { node (f) {\color{blue}}} 
    child { node (g) {\color{blue}}}
    child { node (h) {\color{blue}}}
    child { node (i) {\color{blue}{$Q_{i_1,...,i_{k},2^n}$}}};
  \path (c) -- (e) node [midway] {$ \cdots $};
 \path (e) -- (f) node [midway] {$ \cdots $};
    \path (f) -- (g) node [midway] {$ \cdots $};
 \path (g) -- (h) node [midway] {$ \cdots $};
 \path (h) -- (i) node [midway] {$ \cdots $};
    \end{tikzpicture}
\caption{Step $k$ to step $k+1$ in the construction of Example 3.2 in $Q=[0,1]^2$}
\label{Fig1}%
    \end{figure}


\begin{figure}
\begin{center}
\color{cyan}
\setlength{\unitlength}{10cm}
\begin{picture}(1,1)(0,0)

\put(0,1){(0,1)}
\put(1,0){(1,0)}
\put(1,1){(1,1)}
\put(0,1){\line(1,0){1}}
\put(0,0){\line(0,1){1}}
\put(1,0){\line(0,1){1}}
\put(0,0){\line(1,0){1}}

\put(0.45, 0){\line(0,1){0.45}}

\put(0.55,0){\line(0,1){0.45}}

\put(0, 0.45){\line(1,0){0.45}}

\put(0, 0.55){\line(1,0){0.45}}

\put(0.45, 0.55){\line(0,1){0.45}}

\put(0.55, 0.55){\line(0,1){0.45}}

\put(0.55, 0.55){\line(1,0){0.45}}

\put(0.55, 0.45){\line(1,0){0.45}}


\put(0.04,0){\line(0,1){0.04}}
\put(0.41,0){\line(0,1){0.04}}
\put(0.59,0){\line(0,1){0.04}}
\put(0.96,0){\line(0,1){0.04}}
\put(0,0.04){\line(1,0){0.04}}
\put(0.41,0.04){\line(1,0){0.04}}
\put(0.55,0.04){\line(1,0){0.04}}
\put(0.96,0.04){\line(1,0){0.04}}

\put(0.04,0.41){\line(0,1){0.04}}
\put(0.41,0.41){\line(0,1){0.04}}
\put(0.59,0.41){\line(0,1){0.04}}
\put(0.96,0.41){\line(0,1){0.04}}
\put(0,0.41){\line(1,0){0.04}}
\put(0.41,0.41){\line(1,0){0.04}}
\put(0.55,0.41){\line(1,0){0.04}}
\put(0.96,0.41){\line(1,0){0.04}}

\put(0.04,0.55){\line(0,1){0.04}}
\put(0.41,0.55){\line(0,1){0.04}}
\put(0.59,0.55){\line(0,1){0.04}}
\put(0.96,0.55){\line(0,1){0.04}}
\put(0,0.59){\line(1,0){0.04}}
\put(0.41,0.59){\line(1,0){0.04}}
\put(0.55,0.59){\line(1,0){0.04}}
\put(0.96,0.59){\line(1,0){0.04}}

\put(0.04,0.96){\line(0,1){0.04}}
\put(0.41,0.96){\line(0,1){0.04}}
\put(0.59,0.96){\line(0,1){0.04}}
\put(0.96,0.96){\line(0,1){0.04}}
\put(0,0.96){\line(1,0){0.04}}
\put(0.41,0.96){\line(1,0){0.04}}
\put(0.55,0.96){\line(1,0){0.04}}
\put(0.96,0.96){\line(1,0){0.04}}

\put(0.22, 0.77){\color{blue}{$Q_1$}}
\put(0.77, 0.77){\color{blue}{$Q_2$}}
\put(0.22, 0.22){\color{blue}{$Q_3$}}
\put(0.77, 0.22){\color{blue}{$Q_4$}}

\put(0.003, 0.97){\color{red}{\tiny{$Q_{1,1}$}}}
\put(0.413, 0.97){\color{red}{\tiny{$Q_{1,2}$}}}
\put(0.003, 0.56){\color{red}{\tiny{$Q_{1,4}$}}}
\put(0.413, 0.56){\color{red}{\tiny{$Q_{1,3}$}}}

\put(0.553, 0.97){\color{red}{\tiny{$Q_{2,1}$}}}
\put(0.963, 0.97){\color{red}{\tiny{$Q_{2,2}$}}}
\put(0.553, 0.56){\color{red}{\tiny{$Q_{2,3}$}}}
\put(0.963, 0.56){\color{red}{\tiny{$Q_{2,4}$}}}

\put(0.003, 0.423){\color{red}{\tiny{$Q_{3,1}$}}}
\put(0.413, 0.423){\color{red}{\tiny{$Q_{3,2}$}}}
\put(0.003, 0.01){\color{red}{\tiny{$Q_{3,3}$}}}
\put(0.413, 0.01){\color{red}{\tiny{$Q_{3,4}$}}}

\put(0.553, 0.423){\color{red}{\tiny{$Q_{4,1}$}}}
\put(0.963, 0.423){\color{red}{\tiny{$Q_{4,2}$}}}
\put(0.553, 0.01){\color{red}{\tiny{$Q_{4,3}$}}}
\put(0.963, 0.01){\color{red}{\tiny{$Q_{4,4}$}}}





\end{picture}
\end{center}
\caption{Construction of the cubes for $n=2$}
\label{Fig3}%
\end{figure}
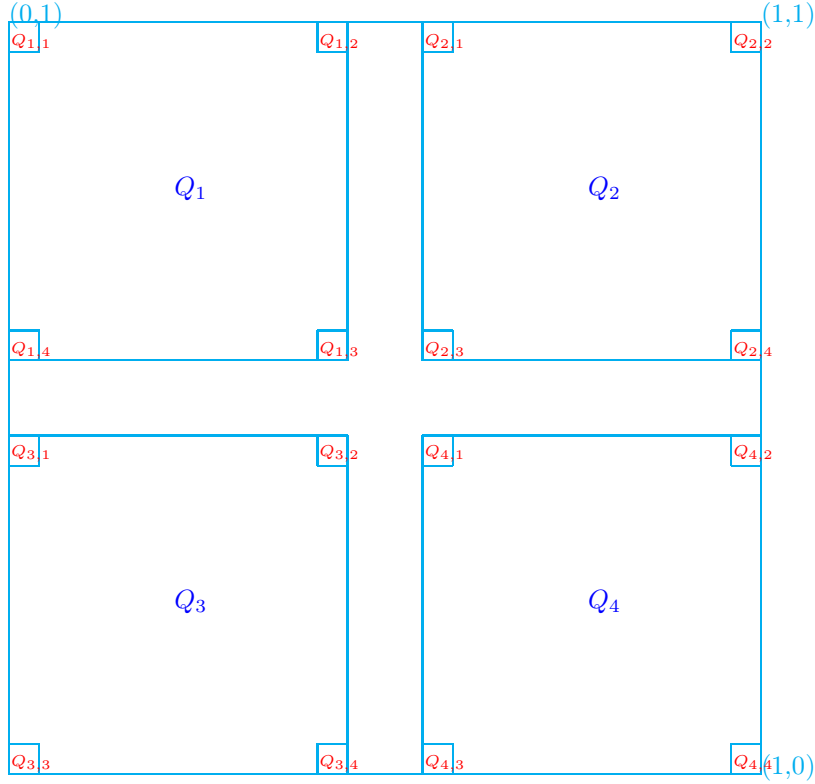

Now, we define a sequence ${\lbrace \delta_k \rbrace}_{k \in {\Bbb N}}$ as follows: 
$$\delta_1=V_0-2^nV_1=1-\dfrac{2^n}{c};$$ $$\vdots$$ $$ \delta_k=V_{k-1}-2^nV_k=\dfrac{1}{c^{(k-1)^2}}-\dfrac{2^n}{c^{k^2}};  $$ $$\vdots$$ Notice that $\delta_k$ is the measure of what is left, at the $k$-th step,  in $Q_{i_1,...,i_{k-1}}$ when we remove from it $2^n$ disjoint cubes each of measure $V_k$. This works by the choice of $c\geq 2^n +1$, from which it follows that $\delta_k>0$ for each $k \in {\Bbb N}$. \\Therefore, for each $k \in {\Bbb N} $, at the k-th step we have constructed a collection of cubes  $$ {\mathcal  Q}_k= \lbrace Q_{i_1,...,i_k} \text{, with }(i_1,...,i_k) \in {\lbrace 1,...,2^n \rbrace^k}\rbrace $$ such that: 
\begin{enumerate}
\item{each cube $Q_{i_1}$  contains a vertex of $V \left( [0,1]^n\right)$ and, for $k>1$, each cube $Q_{i_1,...,i_k}$ has only one vertex in common with $Q_{i_1,...,i_{k-1}}$;}
\item{$\lambda(Q_{i_1,...,i_k})=V_k=\dfrac{1}{c^{k^2}}$;}
\item{for each $(i_1,...,i_{k-1},j_k),(i_1,...,i_{k-1},j'_k)$, with $j_k,j'_k \in  \lbrace1,...,2^n\rbrace$, 
$$dist(Q_{i_1,...,i_{k-1},j_k},Q_{i_1,...,i_{k-1},j'_k}) \geq \dfrac{1}{\sqrt[n]{c^{(k-1)^2}}}-\dfrac{2}{\sqrt[n]{c^{k^2}}}$$
 and, if we set $d_k=\dfrac{1}{\sqrt[n]{c^{(k-1)^2}}}-\dfrac{2}{\sqrt[n]{c^{k^2}}}$, then $d_k^n$ is the Lebesgue measure of the largest cube contained in $Q_{i_1,...,i_{k-1}} \setminus \cup_{i_k \in {\lbrace1,...,2^n\rbrace}}Q_{i_1,...,i_{k-1},i_k}$, having inner points in common with at most one cube 
$Q_{i_1,...,i_k}$, with $i_{k} \in {\lbrace1,...,2^n\rbrace}.$ }
\item{for each $(i_1,...,i_{k}),(j_1,...,j_k) \in {\lbrace1,...,2^n\rbrace}^k $, 
$$dist(Q_{i_1,...,i_k},Q_{j_1,...,j_k}) \geq  D_k,$$ 
where 
$$D_1=d_1,$$ 
$$\text{ for $k\geq2$}, D_k=min\{d_k,D_{k-1}\},$$ 
and where $D_k^n$ turns out to be  the Lebesgue measure of the largest cube contained in ${[0,1]}^n \setminus \cup_{(i_1,...,i_k) \in {\lbrace1,...,2^n\rbrace}^k} Q_{i_1,...,i_k}$,  having inner points in common with at most one cube  $Q_{i_1,...,i_k}$, with $(i_{1}, \dots, i_{k}) \in {\lbrace1,...,2^n\rbrace}^{k}.$}
\end{enumerate}

Let $$C=\cap_{k \in {\Bbb N}} \cup_{(i_1,...,i_k) \in {\lbrace1,...,2^n\rbrace}^k}Q_{i_1,...,i_k}.$$ Clearly, by construction, $C$ is a Cantor space.\\
Let $\alpha>0$. We now prove that ${\mathcal {H}}^{\alpha}(C)=0$. \\
Consider the  map $f(x)=2^{nx} \dfrac{1}{c^{\frac{\alpha}{n} x^2}}=e^{nxlog2}e^{- \frac{\alpha}{n} x^2logc}.$ Clearly, $\lim _{x\rightarrow +\infty} f(x)=0.$  Observe that 
$${\mathcal {H}}^{\alpha}(C) \leq 2^{nk} \sqrt{n^{\alpha}} V_k^{\frac{\alpha}{n}},$$
where $\sqrt{n} V_k^{\frac{1}{n}}$ is the diameter of the cubes at the $k-th$ step. So that 
$$0  \leq   \lim_{k\rightarrow+\infty} {\mathcal {H}}^{\alpha}(C)  \leq   \lim_{k\rightarrow+\infty} 2^{nk} \sqrt{n^{\alpha}} V_k^{\frac{\alpha}{n}} 
  =   \sqrt{n^{\alpha}} \lim_{k\rightarrow+\infty} f(k)=0.$$
Hence, we conclude that ${\mathcal {H}}^{\alpha}(C)=0$ for each $\alpha>0$ and thus  $C$ has Hausdorff dimension 0. 

It remains to prove that $C$ is not microscopic. To this end, observe that 
\begin{eqnarray*}
 d_k & = & \dfrac{1}{\sqrt[n]{c^{(k-1)^2}}}-\dfrac{2}{\sqrt[n]{c^{k^2}}}\\
&  = & \dfrac{\sqrt[n]{c^{2k-1}}-2}{\sqrt[n]{c^{k^2}}} \\
& \geq  & \dfrac{\sqrt[n]{(2^n +1)^{2k-1}}-2}{\sqrt[n]{c^{k^2}}} \\
& \geq & \dfrac{\sqrt[n]{(2^n +1)}-2}{\sqrt[n]{c^{k^2}}},\\
\end{eqnarray*}
 where $0<{\sqrt[n]{(2^n +1)}-2}\leq 1$.
Moreover, since for each $k \geq 1$ there exists $s \in \{1,...,k \}$ for which $D_k=d_s$, then 
$$D_k=d_s \geq \dfrac{\sqrt[n]{(2^n +1)}-2}{\sqrt[n]{c^{s^2}}}.$$
Let $\epsilon= \dfrac{\left[ \sqrt[n]{(2^n +1)}-2\right] ^{4n}}{{c^{4}}}$. Assume that $\lbrace I_h \rbrace_{h \in {\Bbb N}}$ is a sequence of rectangles such that $\lambda(I_h) \leq  \epsilon^h.$ If $h$ belongs to the set $$ H_k=\left\lbrace  h \in {\Bbb N} : \dfrac{(k+1)^2}{4} > h \geq \dfrac{k^2}{4} \right\rbrace, $$  we have 
\begin{eqnarray*}
\lambda(I_h) & \leq & \epsilon^h < \epsilon^{\frac{k^2}{4}} \\ 
&=& \left( \dfrac{\left[ \sqrt[n]{(2^n +1)}-2\right] ^{4n}}{{c^{4}}}\right)^{\frac{k^2}{4}} = \dfrac{\left[ \sqrt[n]{(2^n +1)}-2\right] ^{nk^2}}{{c^{k^2}}} \\
& \leq &  \dfrac{\left[ \sqrt[n]{(2^n +1)}-2\right] ^{n}}{{c^{k^2}}} \leq   \dfrac{\left[ \sqrt[n]{(2^n +1)}-2\right] ^{n}}{{c^{s^2}}} \leq D_k^n. 
\end{eqnarray*}

Hence, $I_h$ can intersect at most $2^{(n-1)k}$ cubes remaining at the $k-th$ step. Let $a_{k}$ be the number of cubes constructed at the $k-th$ step which intersect some $I_{h}$ with 
$h < \frac{{(k+1)}^{2}}{4}$, that is some $h \in \cup_{i=1}^{k} H_{i}$. The following recursive estimate holds

$$a_{k+1} \leq 2^{n}a_{k} + 2^{(n-1)k} \vert H_{k+1} \vert \hspace{1.2cm} (\star).$$

Now,  we show that, for each $k \geq 1$,  
$$a_{k} \leq 2^{nk} - [2^{(n-1)k}]k. \hspace{2.3cm} (\bullet).$$ 
By construction, we see that it holds for $k=1, 2, 3$. In order to prove that $(\bullet)$ also holds for $k \geq 4$, we argue by induction with basis $k_{0} =3$ and applying $(\star)$.

For $k \geq 4$, 
$$\dfrac{(k+1)^2}{4} - \dfrac{k^2}{4} = \dfrac{2k+1}{4} < k-1.$$
The above estimate  means that $H_k$  contains at most  $k-2$ elements, and hence $\vert H_{k+1} \vert \leq k-1.$ 
Therefore, for $k \geq 4$, 
$$a_{k+1} \leq 2^n a_k +  [2^{(n-1)k}] [k-1].$$
Hence, if we assume 
$$a_{k} \leq 2^{nk} -  [2^{(n-1)k}]k,$$
then we have 
\begin{eqnarray*}
a_{k+1} & \leq &  2^n a_k + [2^{(n-1)k}] [k-1]\\
& \leq & 2^{n} \{2^{nk} -  [2^{(n-1)k}] k\} +  [2^{(n-1)k}] [k-1] \\
& = & 2^{n(k+1)} - [2^{n + (n-1)k}] k +  [2^{(n-1)k}] [k-1] \\
& = & 2^{n(k+1)} -  2^{(n-1)k} [k(2^{n} - 1) + 1] \\
& \leq & 2^{n(k+1)} -  2^{(n-1)k} [2^{n-1} k + 2^{n-1}] \\
& = & 2^{n(k+1)} -  2^{(n-1)(k+1)} [k+1].
\end{eqnarray*}
Therefore, $(\bullet)$ holds for every $k \geq 1$. 

The inequality $(\bullet)$, in particular, means that there are cubes at the $k-th$ step that do not intersect any of the rectangles  $I_{h}$ with $h < \frac{{(k+1)}^{2}}{4}$.  \\
For each fixed $k$, let $F_k$ be the union of these cubes. By construction, $F_k$ is a decreasing sequence of nonempty compact sets. Consequently, the intersection $F=F_1 \cap F_2 \cap F_3 ...$ is a nonempty compact subset of $C$ and, also by construction, none of the rectangles $I_h$ intersects $F$. In particular, the union of the rectangles $I_h$ cannot cover $F,$ and then cannot cover $C$ either. Hence $C$ is not microscopic.

\vspace{0.3cm}
\noindent
\small{\textsc{Emma D'Aniello},}
\small{\textsc{Dipartimento di Matematica e Fisica},}
\small{\textsc{Universit\`a degli Studi della Campania ``Luigi Vanvitelli''},}
\small{\textsc{Viale Lincoln n. 5, 81100 Caserta,}}
\small{\textsc{Italia}}\\
\footnotesize{\texttt{emma.daniello@unicampania.it}.}

\vspace{0.3cm}
\noindent
\small{\textsc{Martina Maiuriello},}
\small{\textsc{Dipartimento di Matematica e Fisica},}
\small{\textsc{Universit\`a degli Studi della Campania ``Luigi Vanvitelli''},}
\small{\textsc{Viale Lincoln n. 5, 81100 Caserta,}}
\small{\textsc{Italia}}\\
\footnotesize{\texttt{martina.maiuriello@unicampania.it}.}


\begin{thebibliography}{9}
\bibitem{AP}{J. Appell, {\it Insiemi et operatori "piccoli" in analisi funzionale}, Rend. Ist. Mat. Univ. Trieste 33 (2001), 127--199.}
\bibitem{ADV}{J. Appell, E. D'Aniello, M. V$\mathrm{\ddot{a}}$th, {\it Some remarks on small sets}, Ricerche di Matematica,Vol. L, fas. 20 (2001),  255--274, addendum Vol. 2005.}
\bibitem{BFN}{M. Balcerzak, T. Filipczak, P. Nowakowski, {\it Families of symmetric Cantor sets from the category and measure viewpoints}, to appear in Georgian Math. J. (2019).}
\bibitem{BC}{A.M. Bruckner, J. Ceder, {\it Chaos in terms of the map $x \mapsto \omega(x,f)$}, Pacific J. Math. 156, no. 1 (1992), 63--96.}
\bibitem{BBT}{A.M. Bruckner,  J.B. Bruckner, B.S. Thomson, {\it Real Analysis}, Prentice-Hall Inc., 1997.}
\bibitem{ED}{E. D'Aniello, {\it Non-self-similar sets in ${[0,1]}^N$ of arbitrary dimension}, J. Math. Anal. Appl. 456 (2017), no. 2, 1123--1128.}
\bibitem{DS}{E. D'Aniello, T.H. Steele, {\it Attractors for iterated function schemes on $[0,1]^N$ are exceptional}, J. Math. Anal. Appl. 424 (2015), 537--541.}
\bibitem{EDG0}{G.A. Edgar,{\it Measure, Topology and Fractal Measures},  Springer-Verlag , New York, 1990.}
\bibitem{EDG}{G.A. Edgar,{\it Integral, Probability and Fractal Measures},  Springer-Verlag , New York, 1998.}
\bibitem{HO}{G. Horbaczewska, {\it General approach to microscopic-type sets}, J. Math. Anal. Appl. 461 (2018), 51--58.}
\bibitem{HO1}{G. Horbaczewska, {\it Microscopic sets with respect to sequences of functions},
Tatra Mt. Math. Publ. 58 (2014), 137--144.}
\bibitem{HKW}{G. Horbaczewska, A. Karasi$\mathrm{\acute{n}}$ska, E. Wagner-Bojakowska,{\it Properties of the $\sigma$-ideal of microscopic sets}, Traditional and Present-Day Topics in Real Analysis,  $\L\mathrm{\acute{o}}$d$\mathrm{\acute{z}}$ Univ. Press, $\L\mathrm{\acute{o}}$d$\mathrm{\acute{z}}$ (2013), 323--343. }
\bibitem{KW1}{A. Karasi$\mathrm{\acute{n}}$ska, E. Wagner-Bojakowska, {\it Homeomorphisms of linear and planar sets of the first category into microscopic sets}, Topology Appl. 159, no. 7 (2012), 1894--1898}.
\bibitem{KPW}{A. Karasi$\mathrm{\acute{n}}$ska, A. Paszkiewicz, E. Wagner-Bojakowska, {\it A generalization of the notion of microscopic sets}, Lith.
Math. J. 57 (2017), 319--330.}
\bibitem{KPOW}{A. Karasi$\mathrm{\acute{n}}$ska, W. Poreda, E. Wagner-Bojakowska, {\it Duality principle for microscopic sets}, Real functions, density topology and related topics, $\L\mathrm{\acute{o}}$d$\mathrm{\acute{z}}$ Univ. Press, $\L\mathrm{\acute{o}}$d$\mathrm{\acute{z}}$ (2011), 83--87.}
\bibitem{KW0}{A. Karasi$\mathrm{\acute{n}}$ska, E. Wagner-Bojakowska, {\it Nowhere monotone functions and microscopic sets}, Acta Math. Hungar. 120, no. 3 (2008), 235--248.}
\bibitem{KW}{A. Karasi$\mathrm{\acute{n}}$ska, E. Wagner-Bojakowska, {\it Microscopic and strongly microscopic sets on the plane. Fubini theorem and Fubini property}, Demonstratio Math. XLVII, no. 3 (2014), 581--594}
\bibitem{KA} {J. Kardos, {\it Hausdorff dimension of symmetric Cantor sets}, Acta Math. Hungar. 84 (1999), 257--266.}
\bibitem{FA}{K. Falconer, {\it Fractal Geometry: Mathematical Foundations and Applications}, John Wiley and Sons, 1990.}
\bibitem{FW}{M. Filipczak, E. Wagner-Bojakowska, {\it Remarks on small sets on the real line}, Tatra Mt. Math. Publ. 42 (2009), 73--80.}
\bibitem{OX}{J.C. Oxtoby, {\it Measure and Category}, Springer-Verlag, 1971.}
\bibitem{PA} {A. Paszkiewicz, {\it On microscopic sets and Fubini property in all directions}, 
Math. Slovaca 68, no. 5 (2018), 1041--1048.}
\bibitem{PAWB}{A. Paszkiewicz, E. Wagner-Bojakowska, {\it Fubini property for microscopic sets}, Tatra Mt. Math. Publ. 65 (2016), 143--149.}
\bibitem{PEPH}{S. Pedersen, J. D. Philips, {\it Exact Hausdorff measure of certain non-self-similar Cantor sets}, Fractals 21 (2013),
1350016, 13 p.}
\bibitem{PRTU}{F. Prus-Wisniowski, F. Tulone, {\it The arithmetic decomposition of central Cantor sets}, J. Math. Anal. Appl. 467 (2018), 26--31.}
\end{thebibliography}
\end{document}